\numberwithin{equation}{section} \setlength{\textwidth}{16cm}
\newtheorem{theorem}{Theorem}[section]
\newtheorem{corollary}{Corollary}[section]
\theoremstyle{definition}
\theoremstyle{remark}
\newtheorem{remark}{Remark}[section]
\numberwithin{equation}{section}
\begin{document}
\title[Certain Classes of Bi-univalent functions defined by Horadam Polynomials]{Initial bounds for certain classes  of bi-univalent functions defined by Horadam Polynomials}
\author{Nanjundan Magesh, Jagadeesan Yamini and Chinnaswamy Abirami}
\maketitle
\vspace{-.5cm}
\begin{center}
	Post-Graduate and Research Department of Mathematics\\	Government Arts College for Men, Krishnagiri 635001, Tamilnadu, India.\\
	{\bf e-mail~:~}\verb+nmagi_2000@yahoo.co.in+
\end{center}
\begin{center}
	Department of Mathematics, Govt First Grade College Vijayanagar, Bangalore-560104, Karnataka, India.\\
	{\bf e-mail~:~}\verb+yaminibalaji@gmail.com+
\end{center}
\begin{center}
	Faculty of Engineering and Technology\\ SRM University, Kattankulathur-603203, Tamilnadu, India.\\
	{\bf e-mail~:~}\verb+shreelekha07@yahoo.com+
\end{center}
\begin{abstract}
	Our present investigation is motivated essentially by the fact that, in Geometric Function Theory, one can find many interesting and fruitful usages of a wide variety of special functions and special polynomials. The main purpose of this article is to make use of the Horadam polynomials $h_{n}(x)$ and the generating function $\Pi(x,\; z)$, in order to introduce 	three new subclasses of the bi-univalent function class $\Sigma.$ For functions belonging to the defined classes, we then derive coefficient inequalities and the Fekete–Szeg\"{o} inequalities. Some interesting observations of the results presented here are also discussed. We also provide relevant connections of our results with those considered in earlier investigations. 
\
\\
\
\\
\
{\it keywords and Phrases~:~ Univalent functions, bi-univalent functions, bi-Mocanu-convex functions, bi-$\alpha-$starlike functions, bi-starlike functions, bi-convex functions, Fekete-Szeg\"{o} problem, Chebyshev polynomials, Horadam polynomials.}
\
\\
\
\\
\
{\bf 2010 Mathematics Subject Classifications~:~ Primary 11B 39,\; 30C45,\; 33C45; Secondary 30C50,\; 33C05.}
\end{abstract}

\section{Introduction}
Let $\mathbb{R}=\left( -\infty ,\infty \right) $\ be the set of real
numbers, $\mathbb{C}$ be the set of complex numbers and%
\begin{equation*}
\mathbb{N}:=\left\{ 1,2,3,\ldots \right\} =\mathbb{N}_{0}\backslash \left\{
0\right\}
\end{equation*}%
be the set of positive integers. Let $\mathcal{A}$ denote the class of functions of the form
\begin{equation}
f(z)=z+\sum\limits_{n=2}^{\infty }a_{n}z^{n}  \label{Int-e1}
\end{equation}%
which are analytic in the open unit disk $\Delta=\{z:z\in \mathbb{C}\,\,%
\mathrm{and}\,\,|z|<1\}.$ Further, by $\mathcal{S}$ we shall denote the
class of all functions in $\mathcal{A}$ which are univalent in $\Delta.$

It is well known that every function $f\in \mathcal{S}$ has an inverse $%
f^{-1},$ defined by
\begin{equation*}
f^{-1}(f(z))=z \qquad (z\in\Delta)
\end{equation*}
and
\begin{equation*}
f(f^{-1}(w))=w \qquad (|w| < r_0(f);\,\, r_0(f) \geqq \frac{1}{4}),
\end{equation*}
where
\begin{equation*}
f^{-1}(w) = w - a_2w^2 + (2a_2^2-a_3)w^3 - (5a_2^3-5a_2a_3+a_4)w^4+\ldots .
\end{equation*}

A function $f\in \mathcal{A}$ is said to be bi-univalent in $\Delta$ if
both a function $f$ and it's inverse $f^{-1}$ are univalent in $\Delta.$ Let $\Sigma $
denote the class of bi-univalent functions in $\Delta$ given by (\ref{Int-e1}). 

In 2010, Srivastava et al. \cite{HMS-AKM-PG} revived the study of bi-univalent functions by their pioneering work on the study of coefficient problems. Various subclasses of
the bi-univalent function class $\Sigma $ were introduced and non-sharp
estimates on the first two coefficients $|a_{2}|$ and $|a_{3}|$ in the
Taylor-Maclaurin series expansion (\ref{Int-e1}) were found in the
recent investigations (see, for example, \cite{Ali-Ravi-Ma-Mina-class,SA-SY-2017-GJM,SA-SY-2017-arXiv,MC-ED-HMS-2018-TJM,VBG-SBJ-Ganita-2018,HOG-GMS-JS-Fib-2018,Jay-SGH-SAH-2014,SK-EAA-AZ-2017-MJM,SKL-VR-SS-2014-AAA,XL-APW-2012-IMF,NM-SB-2018-AM,HO-NM-VKB-2019-AEJM,ZP-QH-2014-AMS,HMS-SA-SY-2018-IJSTTS,HMS-FMS-HOG-2018-Filomat,HMS-SSE-SGH-JMJ-2018-IMS,HMS-NM-JY-2014,ZT-LX-2018-JMI,LX-XL-2015-Filomat,PZ-2014-BBMSSS}) 
and including the references therein. The afore-cited all these papers on the subject were actually motivated by the work of Srivastava et al. \cite{HMS-AKM-PG}. However, the problem to find the coefficient bounds on $|a_{n}|$ ($n=3,4,\dots $) for functions $f\in \Sigma $ is still an open problem.

For analytic functions $f$ and $g$ in $\Delta,$ $f$ is said to be
subordinate to $g$ if there exists an analytic function $w$ such that 
\begin{equation*}
w(0)=0, \quad \quad |w(z)|<1 \quad \mathrm{and} \quad f(z)=g(w(z)) \qquad
(z\in\Delta).
\end{equation*}
This subordination will be denoted here by
\begin{equation*}
f \prec g \qquad (z\in\Delta)
\end{equation*}
or, conventionally, by
\begin{equation*}
f(z) \prec g(z) \qquad (z\in\Delta).
\end{equation*}

In particular, when $g$ is univalent in $\Delta,$
\begin{equation*}
f \prec g \qquad (z\in\Delta) ~\Leftrightarrow ~f(0)=g(0) \quad \mathrm{%
	and} \quad f(\Delta) \subset g(\Delta).
\end{equation*}

The Horadam polynomials $h_{n}(x,\; a,\; b;\; p,\; q),$ or briefly  $h_{n}(x)$ are given by the following recurrence relation (see \cite{AFH-JMM-1985-FQ,TH-EGK-2009-IMF})): 
\begin{eqnarray}\label{HP-RR}
h_{n}(x) &=& pxh_{n-1}(x)+qh_{n-2}(x) \qquad (n\in \mathbb{N})
\end{eqnarray}
with 
\begin{eqnarray}\label{HP-ab}
h_{1}(x)=a \qquad \hbox{and} \qquad h_{2}(x)=bx
\end{eqnarray}
for some real constants $a,$ $b,$ $p$ and $q.$ 

The generating function of the Horadam polynomials $h_{n}(x)$ (see \cite{TH-EGK-2009-IMF}) is given by 
	\begin{equation}\label{HP-GF}
	\Pi(x,\; z):= \sum\limits_{n=1}^{\infty}h_{n}(x)z^{n-1} = \dfrac{a+(b-ap)xz}{1-pxz-qz^{2}}~.
	\end{equation}
Here, and in what follows, the argument $x \in \mathbb{R}$
	is independent of the argument $z \in \mathbb{C};$ that is, $x \neq \Re(z).$

Note that for particular values of $a,$ $b,$ $p$ and $q$, the Horadam polynomial $h_{n}(x)$ leads to various polynomials, among those, we list few cases here (see,  \cite{AFH-JMM-1985-FQ,TH-EGK-2009-IMF} for more details):  
\begin{enumerate}
	\item For $a=b=p=q=1,$ we have the Fibonacci polynomials $F_{n}(x)$. 
	\item For $a=2$ and $b=p=q=1,$ we obtain the Lucas polynomials $L_{n}(x)$. 
	\item For $a=q=1$ and $b=p=2,$ we get the Pell polynomials $P_{n}(x)$. 
	\item For $a=b=p=2$ and $q=1,$ we attain the Pell-Lucas polynomials $Q_{n}(x)$. 
	\item For $a=b=1,\; p=2$ and $q=-1,$ we have the Chebyshev polynomials $T_{n}(x)$ of the first kind 
	\item For $a=1,\; b=p=2$ and $q=-1,$ we obtain the Chebyshev polynomials $U_{n}(x)$ of the second kind. 

\end{enumerate}

Recently, in literature, the coefficient estimates are found for functions in the class of univalent and bi-univalent functions associated with certain polynomials like the Faber polynomial \cite{Jay-SGH-SAH-2014}, the Chebyshev polynomials \cite{VBG-SBJ-Ganita-2018}, the Horadam polynomial \cite{HMS-SA-SY-2018-IJSTTS}. Motivated in these lines, estimates on initial coefficients of the Taylor-Maclaurin series expansion \eqref{Int-e1} and Fekete-Szeg\"{o} inequalities for certain classes of bi-univalent functions defined by means of Horadam polynomials are obtained. The classes introduced in this paper are motivated by the corresponding classes investigated in \cite{Ali-Ravi-Ma-Mina-class,SKL-VR-SS-2014-AAA,HMS-SA-SY-2018-IJSTTS,ZP-QH-2014-AMS}.
\section{Coefficient Estimates and Fekete-Szeg\"{o} Inequalities}
A function $f\in \Sigma$ of the form \eqref{Int-e1} belongs to the class $\mathcal{S}_{\Sigma}^{\ast}(\alpha,\; x),$ $\alpha \geq 0$  and $z,\; w \in \Delta,$ if the following conditions are satisfied: 
\begin{equation*} \label{CR-RAP-NM-P1-e1}
\dfrac{zf'(z)}{f(z)}+\alpha \dfrac{z^{2}f''(z)}{f(z)} \prec \Pi(x,\; z)+1-a
\end{equation*}
and for $g(w)=f^{-1}(w)$ 
\begin{equation*}  \label{CR-RAP-NM-P1-e2}
\dfrac{wg'(w)}{g(w)}+\alpha\dfrac{w^{2}g''(w)}{g(w)} \prec \Pi(x,\; w) +1-a,
\end{equation*}
where the real constants $a$ and $b$ are as in \eqref{HP-ab}.

Note that $S_{\Sigma}^{\ast}(x) \equiv \mathcal{S}_{\Sigma}^{\ast}(0,\; x)$ was introduced and studied by  Srivastava et al. \cite{HMS-SA-SY-2018-IJSTTS}.	 

\begin{remark}\label{Rem-Positive}
If $a=p=x=1,$ $b=2$ and $q=0,$ then we have
\[
\dfrac{zf'(z)}{f(z)}+\alpha \dfrac{z^{2}f''(z)}{f(z)} \prec \dfrac{1+z}{1-z} \qquad (z\in \Delta).
\]

In this case, the function $f$ maps the open unit disk $\Delta$ onto
the half-plane given by $\Re \left(\tfrac{+z}{-z}\right) > 0,$ since the expression $\tfrac{zf'(z)}{f(z)}+\alpha \tfrac{z^{2}f''(z)}{f(z)}$
takes on values in the half-plane. If, on the other hand, we
restrict our considerations for a given univalent function
$p(z) \in \Delta,$ we can investigate the corresponding mapping
problems for other regions of the complex $z-$plane instead
of the half-plane $\Re(z)>0$. In this way, one can introduce many other subclasses of the function class $\mathcal{S}_{\Sigma}^{\ast}(\alpha,\; x)$.
\end{remark}

\begin{remark}\label{Rem-Che-P}
	When $a=1,$ $b=p=2,$ $q=-1$ and $x\rightarrow t$, the generating function in \eqref{HP-GF} reduces to that of the Chebyshev polynomial $U_{n}(t)$ of the second kind, which is given explicitly by 
	\[
	U_{n}(t) = (n+1){}_{2}F_{1}\left(-n,\; n+2;\; \tfrac{3}{2};\; \tfrac{1-t}{2}\right) = \frac{\sin(n+1)\varphi}{\sin\varphi}, \qquad (t=\cos \varphi)
	\]
	in terms of the hypergeometric function ${}_{2}F_{1}.$ 
\end{remark}

In view of Remark \ref{Rem-Che-P}, the bi-univalent function class  $\mathcal{S}_{\Sigma}^{\ast}(x)$ reduces to $\mathcal{S}_{\Sigma}^{\ast}(t)$ and this class was studied earlier in \cite{SA-SY-2017-GJM,NM-SB-2018-AM}. For functions in the class $\mathcal{S}_{\Sigma}^{\ast}(\alpha,\; x)$, the following coefficient estimates and Fekete-Szeg\"{o} inequality are obtained. 
\begin{theorem}
	\label{Th3} Let $f(z)=z+\sum\limits_{n=2}^{\infty}a_{n}z^{n}$ be in the
	class $\mathcal{S}_{\Sigma}^{\ast}(\alpha,\; x)$. Then 
	\begin{align*}
	\left|a_{2}\right| &\leq \dfrac{\left|bx\right|\sqrt{\left|bx\right|}}{\sqrt{\left|[\left(1+4\alpha\right)b-p(1+2\alpha)^{2}]bx^{2}-qa(1+2\alpha)^{2}\right|}},
	\qquad \hbox{and} \qquad
	\left|a_{3}\right|\leq \dfrac{\left|bx\right|}{2+6\alpha}  + \dfrac{b^{2}x^{2}}{(1+2\alpha)^{2}}
	\end{align*}
	and for $\nu \in \mathbb{R}$
	\begin{eqnarray*}
		\left|a_{3}-\nu a_{2}^{2}\right|
		\leq \left\{
		\begin{array}{ll}
			\dfrac{\left|bx\right|}{2+6\alpha} &\\
			\qquad \hbox{if} \qquad
			\left|\nu - 1\right|\leq \dfrac{\left|[\left(1+4\alpha\right)b-p(1+2\alpha)^{2}]bx^{2}-qa(1+2\alpha)^{2}\right|}{2b^{2}x^{2}\left(1+3\alpha\right)}\\\\
			\dfrac{\left|bx\right|^{3}\left|\nu - 1\right|}{\left|[\left(1+4\alpha\right)b-p(1+2\alpha)^{2}]bx^{2}-qa(1+2\alpha)^{2}\right|} &\\
			\qquad \hbox{if} \qquad 
			\left|\nu - 1\right|\geq \dfrac{\left|[\left(1+4\alpha\right)b-p(1+2\alpha)^{2}]bx^{2}-qa(1+2\alpha)^{2}\right|}{2b^{2}x^{2}\left(1+3\alpha\right)}.
		\end{array}		
		\right. 
	\end{eqnarray*}
\end{theorem}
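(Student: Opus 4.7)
The plan is to follow the now-standard bi-univalent coefficient approach, rewriting the two subordinations as equalities driven by Schwarz functions and then extracting $a_{2}$ and $a_{3}$ from the first two Taylor coefficients on each side. Concretely, I would write
\[
\dfrac{zf'(z)}{f(z)}+\alpha\dfrac{z^{2}f''(z)}{f(z)}=\Pi(x,w(z))+1-a,\qquad
\dfrac{wg'(w)}{g(w)}+\alpha\dfrac{w^{2}g''(w)}{g(w)}=\Pi(x,v(w))+1-a,
\]
where $g=f^{-1}$ and $w(z)=c_{1}z+c_{2}z^{2}+\cdots$, $v(w)=d_{1}w+d_{2}w^{2}+\cdots$ are Schwarz functions, for which the usual Schwarz lemma estimates $|c_{k}|\le 1$ and $|d_{k}|\le 1$ will suffice.

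The first preparatory computation is to expand the right-hand sides using \eqref{HP-RR}--\eqref{HP-GF}: since $h_{1}(x)=a$, $h_{2}(x)=bx$ and $h_{3}(x)=pbx^{2}+qa$, one gets
\[
\Pi(x,w(z))+1-a=1+bx\,c_{1}z+\bigl(bx\,c_{2}+(pbx^{2}+qa)c_{1}^{2}\bigr)z^{2}+\cdots,
\]
and the analogous expansion in $w$ for $\Pi(x,v(w))+1-a$. On the left, using $f(z)=z+a_{2}z^{2}+a_{3}z^{3}+\cdots$ and the inverse expansion $g(w)=w-a_{2}w^{2}+(2a_{2}^{2}-a_{3})w^{3}+\cdots$, one finds
\[
\dfrac{zf'(z)}{f(z)}+\alpha\dfrac{z^{2}f''(z)}{f(z)}=1+(1+2\alpha)a_{2}z+\bigl[(2+6\alpha)a_{3}-(1+2\alpha)a_{2}^{2}\bigr]z^{2}+\cdots
\]
and the corresponding series for $g$. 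Equating coefficients of $z,z^{2},w,w^{2}$ produces four equations in $a_{2},a_{3},c_{1},c_{2},d_{1},d_{2}$.

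From the two linear equations one immediately gets $d_{1}=-c_{1}$ and $(1+2\alpha)a_{2}=bx\,c_{1}$, hence $c_{1}^{2}=d_{1}^{2}=(1+2\alpha)^{2}a_{2}^{2}/(b^{2}x^{2})$. Adding the two quadratic equations makes the $c_{1}^{2}-d_{1}^{2}$ term vanish and yields
\[
2(1+4\alpha)a_{2}^{2}=bx(c_{2}+d_{2})+2(pbx^{2}+qa)\dfrac{(1+2\alpha)^{2}a_{2}^{2}}{b^{2}x^{2}},
\]
which can be solved for $a_{2}^{2}$ in terms of $c_{2}+d_{2}$; applying $|c_{2}|+|d_{2}|\le 2$ delivers the asserted bound on $|a_{2}|$. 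Subtracting the two quadratic equations gives $2(2+6\alpha)(a_{3}-a_{2}^{2})=bx(c_{2}-d_{2})$, from which $|a_{3}|\le |a_{2}^{2}|+|bx|/(2+6\alpha)$; bounding $|a_{2}^{2}|$ here by $b^{2}x^{2}/(1+2\alpha)^{2}$ from the linear relation (not by the stronger estimate just derived) produces exactly the stated estimate for $|a_{3}|$.

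For the Fekete--Szeg\"o inequality, I would write
\[
a_{3}-\nu a_{2}^{2}=(1-\nu)a_{2}^{2}+(a_{3}-a_{2}^{2})=bx\Bigl[\bigl(\Psi(\nu)+\tfrac{1}{4(1+3\alpha)}\bigr)c_{2}+\bigl(\Psi(\nu)-\tfrac{1}{4(1+3\alpha)}\bigr)d_{2}\Bigr],
\]
with
\[
\Psi(\nu)=\dfrac{(1-\nu)b^{2}x^{2}}{2\bigl[[(1+4\alpha)b-p(1+2\alpha)^{2}]bx^{2}-qa(1+2\alpha)^{2}\bigr]},
\]
and then apply the standard two-case triangle-inequality argument: when $|\Psi(\nu)|\le 1/(4(1+3\alpha))$ the two coefficients $\Psi(\nu)\pm 1/(4(1+3\alpha))$ have the same sign-bound, giving $|a_{3}-\nu a_{2}^{2}|\le |bx|/(2+6\alpha)$; otherwise the $\Psi(\nu)$ term dominates and one gets $|a_{3}-\nu a_{2}^{2}|\le 2|bx||\Psi(\nu)|$. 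The threshold $|\Psi(\nu)|\lessgtr 1/(4(1+3\alpha))$ translates directly into the condition on $|\nu-1|$ stated in the theorem. The only real bookkeeping hazard is keeping the $\alpha$-dependent factors $(1+2\alpha)$, $(1+4\alpha)$ and $(1+3\alpha)$ straight while combining the quadratic equations; once those are tracked correctly, the argument is essentially mechanical.
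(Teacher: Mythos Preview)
Your proposal is correct and follows essentially the same approach as the paper: the paper also introduces two Schwarz functions (called $u,v$ with coefficients $u_k,v_k$ in place of your $c_k,d_k$), derives the same four coefficient equations, adds and subtracts the two quadratic relations to isolate $a_2^2$ and $a_3-a_2^2$, and then carries out the identical two-case Fekete--Szeg\"{o} splitting via a function $\Omega(\nu,x)$ equal to your $\Psi(\nu)$. The only differences are notational.
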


\begin{proof}
	Let $f\in\mathcal{S}_{\Sigma}^{\ast}(\alpha,\; x)$ be given by Taylor-Maclaurin expansion \eqref{Int-e1}. Then, there are analytic functions $u$ and $v$ such that 
	\[
	u(0)=0;\quad v(0)=0,\quad \left|u(z)\right|<1 \quad \hbox{and} \quad \left|v(z)\right|<1 \quad (\forall\; z,\; w \in \Delta),
	\]
	we can write
	\begin{equation}  \label{4.2}
	\dfrac{zf'(z)}{f(z)}+\alpha \dfrac{z^{2}f''(z)}{f(z)}
	= \Pi(x,\; u(z))+1-a
	\end{equation}
	and 
	\begin{equation}  \label{4.3}
	\dfrac{wg'(w)}{g(w)}+\alpha\dfrac{w^{2}g''(w)}{g(w)}
	=\Pi(x,\; v(w))+1-a.
	\end{equation}
	Or, equivalently,
	\begin{eqnarray}  \label{4.2a}
	\dfrac{zf'(z)}{f(z)}+\alpha \dfrac{z^{2}f''(z)}{f(z)}
	= 1+h_{1}(x)-a+h_{2}(x)u(z)+h_{3}(x)[u(z)]^{2}+\ldots
	\end{eqnarray}
	and 
	\begin{eqnarray}  \label{4.3a}
	\dfrac{wg'(w)}{g(w)}+\alpha\dfrac{w^{2}g''(w)}{g(w)}
	= 1+h_{1}(x)-a+h_{2}(x)v(w)+h_{3}(x)[v(w)]^{2}+\ldots.
	\end{eqnarray}
	From  \eqref{4.2a} and \eqref{4.3a}, we obtain
	\begin{eqnarray}  \label{4.2b}
	\dfrac{zf'(z)}{f(z)}+\alpha \dfrac{z^{2}f''(z)}{f(z)}= 1+h_{2}(x)u_{1}z+[h_{2}(x)u_{2}+h_{3}(x)u_{1}^{2}]z^{2}+\ldots
	\end{eqnarray}
	and 
	\begin{eqnarray}  \label{4.3b}
	\dfrac{wg'(w)}{g(w)}+\alpha\dfrac{w^{2}g''(w)}{g(w)}
	= 1+h_{2}(x)v_{1}w+[h_{2}(x)v_{2}+h_{3}(x)v_{1}^{2}]w^{2}+\ldots .
	\end{eqnarray}
	It is fairly well known that 
	\[
	\left|u(z)\right| = \left|u_{1}z+u_{2}z^{2}+\ldots\right| <1 \qquad \hbox{and}\qquad 
	\left|v(z)\right| = \left|v_{1}w+v_{2}w^{2}+\ldots\right| <1,
	\]
	then
	\[
	\left|u_{k}\right| \leq 1 \qquad \hbox{and} \qquad \left|v_{k}\right| \leq 1 \qquad (k \in \mathbb{N}).
	\]
	Thus upon comparing the corresponding coefficients in \eqref{4.2b} and \eqref{4.3b}, we have 
	\begin{equation}  \label{4.6}
	\left(1+2\alpha\right)a_{2}
	= h_{2}(x)u_{1}
	\end{equation}
	\begin{equation}  \label{4.7}
	2\left(1+3\alpha\right)a_{3} - \left(1+2\alpha\right)a_{2}^{2}
	= h_{2}(x)u_{2}+h_{3}(x)u_{1}^{2}
	\end{equation}
	
	\begin{equation}  \label{4.8}
	-\left(1+2\alpha\right)a_{2}
	=h_{2}(x)v_{1}
	\end{equation}
	
	and 
	\begin{equation}  \label{4.9}
	\left(3+10\alpha\right)a_{2}^{2} - 2\left(1+3\alpha\right)a_{3}
	=h_{2}(x)v_{2}+h_{3}(x)v_{1}^{2}.
	\end{equation}
	
	From \eqref{4.6} and \eqref{4.8}, we can easily see that
	\begin{equation} \label{4.9a}
	u_{1}=-v_{1}
	\end{equation}
	and
	\begin{eqnarray} 
	2(1+2\alpha)^{2} a_{2}^{2} &=& [h_{2}(x)]^{2}(u_{1}^{2} + v_{1}^{2})\nonumber\\
	a_{2}^2 &=& \dfrac{[h_{2}(x)]^{2}(u_{1}^{2} + v_{1}^{2})}{2(1+2\alpha)^{2}}~.\label{4.10}
	\end{eqnarray}
	If we add   \eqref{4.7} to \eqref{4.9}, we get
	\begin{eqnarray}
	2\left(1+4\alpha\right) a_{2}^{2}
	= h_{2}(x)(u_{2}+v_{2}) + h_{3}(x)(u_{1}^{2}+v_{1}^{2}).\label{4.11}
	\end{eqnarray}
	By substituting  \eqref{4.10} in \eqref{4.11}, we reduce that
	\begin{eqnarray}
	a_{2}^{2} &=& \dfrac{[h_{2}(x)]^{3}\left(u_{2}+v_{2}\right)}{2\left(1+4\alpha\right)[h_{2}(x)]^{2}-2h_{3}(x)(1+2\alpha)^{2}}\label{4.12}
	\end{eqnarray}
	which yields
	\begin{eqnarray}
	\left|a_{2}\right| &\leq& \dfrac{\left|bx\right|\sqrt{\left|bx\right|}}{\sqrt{\left|[\left(1+4\alpha\right)b-p(1+2\alpha)^{2}]bx^{2}-qa(1+2\alpha)^{2}\right|}}.\label{4.13}
	\end{eqnarray}
	By subtracting \eqref{4.9} from \eqref{4.7} and in view of \eqref{4.9a} , we obtain
	\begin{eqnarray}
	4(1+3\alpha)a_{3} - 4(1+3\alpha) a_{2}^{2} &=& h_{2}(x)\left(u_{2}-v_{2}\right) + h_{3}(x)\left(u_{1}^{2}-v_{1}^{2}\right)\nonumber\\
	a_{3}&=& \dfrac{h_{2}(x)\left(u_{2}-v_{2}\right)}{4(1+3\alpha)}  + a_{2}^{2}.\label{4.14}
	\end{eqnarray}
	Then in view of \eqref{4.10}, \eqref{4.14} becomes
	\begin{eqnarray*}
		a_{3}&=& \dfrac{h_{2}(x)\left(u_{2}-v_{2}\right)}{4(1+3\alpha)}  + \dfrac{[h_{2}(x)]^{2}(u_{1}^{2} + v_{1}^{2})}{2(1+2\alpha)^{2}}.\label{4.15}
	\end{eqnarray*}
	Applying \eqref{HP-ab}, we deduce that 
	\begin{eqnarray*}
		\left|a_{3}\right|&\leq& \dfrac{\left|bx\right|}{2+6\alpha}  + \dfrac{b^{2}x^{2}}{(1+2\alpha)^{2}}.\label{4.16}
	\end{eqnarray*}
	From \eqref{4.14}, for $\nu\in \mathbb{R},$ we write
	\begin{eqnarray}\label{Th1-Fekete-e1}
	a_{3} -\nu a_{2}^{2} = \dfrac{h_{2}(x)\left(u_{2}-v_{2}\right)}{4(1+3\alpha)} + \left(1-\nu\right)a_{2}^{2}.
	\end{eqnarray}
	By substituting \eqref{4.12} in \eqref{Th1-Fekete-e1}, we have 
	\begin{eqnarray}\label{Th1-Fekete-e2}
	a_{3} -\nu a_{2}^{2} 
	&=& \dfrac{h_{2}(x)\left(u_{2}-v_{2}\right)}{4(1+3\alpha)}+  \left(\dfrac{\left(1-\nu\right)[h_{2}(x)]^{3}\left(u_{2}+v_{2}\right)}{2[\left(1+4\alpha\right)[h_{2}(x)]^{2}-h_{3}(x)(1+2\alpha)^{2}]}\right)\nonumber\\
	&=& h_{2}(x)\left\{\left(\Omega(\nu,\; x) + \dfrac{1}{4\left(1+3\alpha\right)}\right)u_{2}
	+ \left(\Omega(\nu,\; x) - \dfrac{1}{4\left(1+3\alpha\right)}\right)v_{2}\right\},\nonumber\\
	\end{eqnarray}
	where
	\begin{eqnarray*}
		\Omega(\nu,\; x) = \dfrac{\left(1-\nu\right)[h_{2}(x)]^{2}}{2\left(1+4\alpha\right)[h_{2}(x)]^{2}-2h_{3}(x)(1+2\alpha)^{2}}.
	\end{eqnarray*}
	Hence, in view of \eqref{HP-ab}, we conclude that 
	\begin{eqnarray*}
		\left|a_{3}-\nu a_{2}^{2}\right|
		\leq \left\{
		\begin{array}{ll}
			\dfrac{\left|h_{2}(x)\right|}{2+6\alpha} &; 
			0 \leq \left|\Omega(\nu,\; x)\right|\leq \dfrac{1}{4\left(1+3\alpha\right)}\\\\
			2\left|h_{2}(x)\right|\left|\Omega(\nu,\; x)\right| &; 
			\left|\Omega(\nu,\; x)\right|\geq \dfrac{1}{4\left(1+3\alpha\right)}
		\end{array}		
		\right.
	\end{eqnarray*}
	which evidently completes the proof of Theorem \ref{Th1}.
\end{proof}
For $\alpha=0,$ Theorem \ref{Th3} readily yields the following coefficient estimates for $\mathcal{S}_{\Sigma}^{\ast}(x)$.
\begin{corollary}
	\label{cor2.1} Let $f(z)=z+\sum\limits_{n=2}^{\infty}a_{n}z^{n}$ be in the
	class $\mathcal{S}_{\Sigma}^{\ast}(x)$. Then 
	\begin{align*}
	\left|a_{2}\right| &\leq \dfrac{\left|bx\right|\sqrt{\left|bx\right|}}{\sqrt{\left|[b-p]bx^{2}-qa\right|}},
	\qquad \hbox{and} \qquad
	\left|a_{3}\right|\leq \dfrac{\left|bx\right|}{2} + b^{2}x^{2}
	\end{align*}
	and for $\nu \in \mathbb{R}$
	\begin{eqnarray*}
		\left|a_{3}-\nu a_{2}^{2}\right|
		\leq \left\{
		\begin{array}{ll}
			\dfrac{\left|bx\right|}{2} & 
			~\hbox{if}~\left|\nu - 1\right|\leq \dfrac{\left|[b-p]bx^{2}-qa\right|}{2b^{2}x^{2}}\\\\
			\dfrac{\left|bx\right|^{3}\left|\nu - 1\right|}{\left|[b-p]bx^{2}-qa\right|} &
			~\hbox{if}~\left|\nu - 1\right|\geq \dfrac{\left|[b-p]bx^{2}-qa\right|}{2b^{2}x^{2}}.
		\end{array}		
		\right. 
	\end{eqnarray*}
\end{corollary}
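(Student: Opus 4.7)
The plan is to obtain this corollary as an immediate specialization of Theorem~\ref{Th3} to the case $\alpha = 0$, exploiting the identification $\mathcal{S}_{\Sigma}^{\ast}(x) \equiv \mathcal{S}_{\Sigma}^{\ast}(0, x)$ recorded just before Remark~\ref{Rem-Positive}. Since any $f \in \mathcal{S}_{\Sigma}^{\ast}(x)$ automatically belongs to $\mathcal{S}_{\Sigma}^{\ast}(0, x)$, every bound supplied by Theorem~\ref{Th3} applies verbatim after substituting $\alpha = 0$; no re-derivation from the subordination relations or the Horadam generating function is required.

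Concretely, I would first evaluate each $\alpha$-dependent factor appearing in Theorem~\ref{Th3} at $\alpha = 0$: namely $1 + 2\alpha = 1$ (so $(1+2\alpha)^2 = 1$), $1 + 3\alpha = 1$ (so $2 + 6\alpha = 2$), and $1 + 4\alpha = 1$. Inserting these values into the first conclusion of Theorem~\ref{Th3} reduces the bound on $|a_2|$ to $\dfrac{|bx|\sqrt{|bx|}}{\sqrt{|(b-p)bx^2 - qa|}}$, and into the second reduces the bound on $|a_3|$ to $\dfrac{|bx|}{2} + b^2 x^2$, matching the corollary's first two displays. For the Fekete-Szeg\"{o} estimate, the threshold $\dfrac{\left|[(1+4\alpha)b - p(1+2\alpha)^2]bx^2 - qa(1+2\alpha)^2\right|}{2 b^2 x^2 (1 + 3\alpha)}$ collapses to $\dfrac{|(b-p)bx^2 - qa|}{2b^2 x^2}$, while the two case-values $\dfrac{|bx|}{2+6\alpha}$ and $\dfrac{|bx|^3 |\nu - 1|}{\left|[(1+4\alpha)b - p(1+2\alpha)^2]bx^2 - qa(1+2\alpha)^2\right|}$ simplify respectively to $\dfrac{|bx|}{2}$ and $\dfrac{|bx|^3 |\nu-1|}{|(b-p)bx^2 - qa|}$, which are exactly the two branches stated in the corollary.

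There is no genuine analytical obstacle: the corollary is purely a bookkeeping specialization. The only care required is verifying that every $\alpha$-dependent numerical factor is simplified consistently across the three bounds, and that the two branches of the piecewise Fekete-Szeg\"{o} estimate are matched correctly with the two cases in the parent theorem under the substitution $\alpha = 0$.
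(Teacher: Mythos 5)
Your proposal is correct and is exactly the paper's own route: the authors obtain Corollary \ref{cor2.1} by the remark ``For $\alpha=0$, Theorem \ref{Th3} readily yields the following coefficient estimates for $\mathcal{S}_{\Sigma}^{\ast}(x)$,'' and your substitutions $1+2\alpha=1$, $1+3\alpha=1$, $1+4\alpha=1$, $2+6\alpha=2$ reproduce all three bounds, including both branches of the Fekete--Szeg\"{o} estimate, verbatim.
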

In view of Remark \ref{Rem-Che-P}, Theorem \ref{Th3} can be shown to yield
the following result.
\begin{corollary}
	\label{Th3-Cor} Let $f(z)=z+\sum\limits_{n=2}^{\infty}a_{n}z^{n}$ be in the
	class $\mathcal{S}_{\Sigma}^{\ast}(\alpha,\; t)$. Then 
	\begin{align*}
	\left|a_{2}\right| &\leq \dfrac{2t\sqrt{2t}}{\sqrt{\left|(1+2\alpha)^{2}-16\alpha^{2}t^{2}\right|}},
	\qquad \hbox{and} \qquad
	\left|a_{3}\right|\leq \dfrac{t}{1+3\alpha}  + \dfrac{4t^{2}}{(1+2\alpha)^{2}}
	\end{align*}
	and for $\nu \in \mathbb{R}$
	\begin{eqnarray*}
		\left|a_{3}-\nu a_{2}^{2}\right|
		\leq \left\{
		\begin{array}{ll}
			\dfrac{2t}{2+6\alpha} &~\hbox{if}~
			\left|\nu - 1\right|\leq \dfrac{\left|(1+2\alpha)^{2}-16\alpha^{2}t^{2}\right|}{8t^{2}\left(1+3\alpha\right)}\\\\
			\dfrac{8t^{3}\left|\nu - 1\right|}{\left|(1+2\alpha)^{2}-16\alpha^{2}t^{2}\right|} &~\hbox{if}~
			\left|\nu - 1\right|\geq \dfrac{\left|(1+2\alpha)^{2}-16\alpha^{2}t^{2}\right|}{8t^{2}\left(1+3\alpha\right)}.
		\end{array}		
		\right. 
	\end{eqnarray*}
\end{corollary}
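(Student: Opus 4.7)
The plan is to obtain Corollary \ref{Th3-Cor} directly from Theorem \ref{Th3} by specializing the Horadam parameters to the Chebyshev configuration $a=1$, $b=2$, $p=2$, $q=-1$, and relabeling $x\to t$, as justified by Remark \ref{Rem-Che-P}. Since Theorem \ref{Th3} has already been proved for arbitrary admissible $(a,b,p,q,x)$, no new coefficient work with $u_{k}, v_{k}$ is needed; the task is purely an algebraic substitution followed by a clean simplification of the denominator that appears throughout the bounds.

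The key step is to simplify the quantity
\[
\Lambda(\alpha,x):=\bigl[(1+4\alpha)b-p(1+2\alpha)^{2}\bigr]bx^{2}-qa(1+2\alpha)^{2},
\]
which is the common denominator appearing in the bound for $|a_{2}|$ and in the Fekete–Szeg\"o threshold in Theorem \ref{Th3}. First I would plug in $b=2$, $p=2$, $a=1$, $q=-1$, $x=t$ and expand:
\[
\Lambda(\alpha,t)=\bigl[2(1+4\alpha)-2(1+2\alpha)^{2}\bigr](2t^{2})+(1+2\alpha)^{2}.
\]
The bracket simplifies via $2(1+4\alpha)-2(1+2\alpha)^{2}=-8\alpha^{2}$, giving
\[
\Lambda(\alpha,t)=-16\alpha^{2}t^{2}+(1+2\alpha)^{2}=(1+2\alpha)^{2}-16\alpha^{2}t^{2},
\]
which is exactly the expression appearing under the square root in the claimed bound on $|a_{2}|$ and in the threshold for the Fekete–Szeg\"o inequality.

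With $\Lambda(\alpha,t)$ in hand, the three bounds of the corollary fall out by direct substitution. For $|a_{2}|$, $|bx|=2t$ so $|bx|\sqrt{|bx|}=2t\sqrt{2t}$, and dividing by $\sqrt{|\Lambda(\alpha,t)|}$ gives the stated estimate. For $|a_{3}|$, the two summands $|bx|/(2+6\alpha)$ and $b^{2}x^{2}/(1+2\alpha)^{2}$ reduce to $t/(1+3\alpha)$ and $4t^{2}/(1+2\alpha)^{2}$, respectively. For the Fekete–Szeg\"o inequality, the threshold $|\Lambda(\alpha,t)|/(2b^{2}x^{2}(1+3\alpha))$ becomes $|\Lambda(\alpha,t)|/(8t^{2}(1+3\alpha))$, and the second branch $|bx|^{3}|\nu-1|/|\Lambda(\alpha,t)|$ becomes $8t^{3}|\nu-1|/|\Lambda(\alpha,t)|$, matching the statement exactly.

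The only place where care is needed is the sign-handling in the bracket simplification (the cancellation $2(1+4\alpha)-2(1+2\alpha)^{2}=-8\alpha^{2}$) and the combined effect of the sign of $q=-1$, which flips the standalone term $-qa(1+2\alpha)^{2}$ to $+(1+2\alpha)^{2}$. These are the only non-trivial manipulations; everything else is direct substitution. Therefore the proof consists of verifying this one-line simplification of $\Lambda(\alpha,t)$ and then reading off the three claimed inequalities from Theorem \ref{Th3}.
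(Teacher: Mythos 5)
Your proposal is correct and matches the paper's intended argument: the paper derives Corollary \ref{Th3-Cor} from Theorem \ref{Th3} precisely by the Chebyshev specialization $a=1$, $b=p=2$, $q=-1$, $x\to t$ of Remark \ref{Rem-Che-P}, and your simplification $\bigl[2(1+4\alpha)-2(1+2\alpha)^{2}\bigr]2t^{2}+(1+2\alpha)^{2}=(1+2\alpha)^{2}-16\alpha^{2}t^{2}$ is exactly the computation needed. All three bounds then follow by direct substitution as you describe.
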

\begin{remark} Results obtained in Corollary \ref{cor2.1} coincide with results obtained in \cite{HMS-SA-SY-2018-IJSTTS}. For $\alpha=0$, Corollary \ref{Th3-Cor} reduces to the results discussed in \cite{SA-SY-2017-GJM,NM-SB-2018-AM}. 
\end{remark}
Next, a function $f\in \Sigma$ of the form \eqref{Int-e1} belongs to the class $\mathcal{M}_{\Sigma}(\alpha,\; x),
$ $0\leqq \alpha \leqq 1$  and $z,\; w \in \Delta,$ if the following conditions are satisfied: 
\begin{equation*} \label{CR-RAP-NM-P1-e1}
(1-\alpha)\dfrac{zf'(z)}{f(z)}+
\alpha\left(1+\dfrac{zf''(z)}{f'(z)}\right) \prec \Pi(x,\; z)+1-a
\end{equation*}
and for $g(w)=f^{-1}(w)$ 
\begin{equation*}  \label{CR-RAP-NM-P1-e2}
(1-\alpha)\dfrac{wg'(w)}{g(w)}+
\alpha\left(1+\dfrac{wg''(w)}{g'(w)}\right) \prec \Pi(x,\; w) +1-a,
\end{equation*}
where the real constants $a$ and $b$ are as in \eqref{HP-ab}.

Note that the class $\mathcal{M}_{\Sigma}(\alpha,\; x),$ unifies the classes $S_{\Sigma}^{\ast}(x)$ and $K_{\Sigma}(x)$ like $\mathcal{M}_{\Sigma}(0,\; x) \equiv S_{\Sigma}^{\ast}(x) $ and $\mathcal{M}_{\Sigma}(1,\; x) \equiv K_{\Sigma}(x)$. In view of Remark \ref{Rem-Positive}, similarly, one can define many subclasses for the expression $(1-\alpha)\tfrac{zf'(z)}{f(z)}+\alpha\left(1+\tfrac{zf''(z)}{f'(z)}\right)$.
In view of Remark \ref{Rem-Che-P}, the bi-univalent function classes  $\mathcal{M}_{\Sigma}^{\ast}(\alpha,\; x)$ would become the class $\mathcal{M}_{\Sigma}^{\ast}(\alpha,\; t)$ and the class $\mathcal{M}_{\Sigma}^{\ast}(\alpha,\; t)$ introduced and studied by Alt\i nkaya and Yal\c cin \cite{SA-SY-2017-arXiv}. For functions in the class $\mathcal{M}_{\Sigma}(\alpha,\; x)$, the following coefficient estimates and Fekete-Szeg\"{o} inequality are obtained. 
\begin{theorem}
	\label{Th1} Let $f(z)=z+\sum\limits_{n=2}^{\infty}a_{n}z^{n}$ be in the
	class $\mathcal{M}_{\Sigma}(\alpha,\; x)$. Then 
	\begin{align*}
	\left|a_{2}\right| &\leq \dfrac{\left|bx\right|\sqrt{\left|bx\right|}}{\sqrt{\left|[\left(1+\alpha\right)b-p(1+\alpha)^{2}]bx^{2}-qa(1+\alpha)^{2}\right|}},
	\qquad \hbox{and} \qquad
	\left|a_{3}\right|\leq \dfrac{\left|bx\right|}{2+4\alpha}  + \dfrac{b^{2}x^{2}}{(1+\alpha)^{2}}
	\end{align*}
	and for $\nu \in \mathbb{R}$
	\begin{eqnarray*}
		\left|a_{3}-\nu a_{2}^{2}\right|
		\leq \left\{
		\begin{array}{ll}
			\dfrac{\left|bx\right|}{2+4\alpha} &\\
			\qquad\hbox{if}\qquad
			\left|\nu - 1\right|\leq \dfrac{\left|[\left(1+\alpha\right)b-p(1+\alpha)^{2}]bx^{2}-qa(1+\alpha)^{2}\right|}{b^{2}x^{2}\left(2+4\alpha\right)}\\\\
			\dfrac{\left|bx\right|^{3}\left|\nu - 1\right|}{\left|[\left(1+\alpha\right)b-p(1+\alpha)^{2}]bx^{2}-qa(1+\alpha)^{2}\right|} &\\
			\qquad\hbox{if}\qquad
			\left|\nu - 1\right|\geq \dfrac{\left|[\left(1+\alpha\right)b-p(1+\alpha)^{2}]bx^{2}-qa(1+\alpha)^{2}\right|}{b^{2}x^{2}\left(2+4\alpha\right)}.
		\end{array}		
		\right.
	\end{eqnarray*}
\end{theorem}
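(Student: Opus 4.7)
The plan is to mirror the proof of Theorem~\ref{Th3}, with the only substantive change being that the operator $\tfrac{zf'(z)}{f(z)}+\alpha\tfrac{z^{2}f''(z)}{f(z)}$ is replaced by the Mocanu-type convex combination $(1-\alpha)\tfrac{zf'(z)}{f(z)}+\alpha\left(1+\tfrac{zf''(z)}{f'(z)}\right)$. First I would invoke the two defining subordinations to produce Schwarz functions $u(z)=u_{1}z+u_{2}z^{2}+\ldots$ and $v(w)=v_{1}w+v_{2}w^{2}+\ldots$ with $|u_{k}|,|v_{k}|\leq 1$, so that
\begin{equation*}
(1-\alpha)\dfrac{zf'(z)}{f(z)}+\alpha\!\left(1+\dfrac{zf''(z)}{f'(z)}\right) = 1+h_{2}(x)u_{1}z+[h_{2}(x)u_{2}+h_{3}(x)u_{1}^{2}]z^{2}+\ldots
\end{equation*}
together with the analogous identity for $g(w)=f^{-1}(w)$.

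Next I would expand the left-hand side using $\tfrac{zf'(z)}{f(z)}=1+a_{2}z+(2a_{3}-a_{2}^{2})z^{2}+\ldots$ and $1+\tfrac{zf''(z)}{f'(z)}=1+2a_{2}z+(6a_{3}-4a_{2}^{2})z^{2}+\ldots$, obtaining the series $1+(1+\alpha)a_{2}z+[(2+4\alpha)a_{3}-(1+3\alpha)a_{2}^{2}]z^{2}+\ldots$. Performing the same computation for $g(w)=w-a_{2}w^{2}+(2a_{2}^{2}-a_{3})w^{3}-\ldots$ yields the series $1-(1+\alpha)a_{2}w+[(3+5\alpha)a_{2}^{2}-(2+4\alpha)a_{3}]w^{2}+\ldots$. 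Matching coefficients at $z,z^{2},w,w^{2}$ gives four equations; the first two immediately yield $u_{1}=-v_{1}$ and $2(1+\alpha)^{2}a_{2}^{2}=[h_{2}(x)]^{2}(u_{1}^{2}+v_{1}^{2})$, while adding and subtracting the second pair produce $2(1+\alpha)a_{2}^{2}=h_{2}(x)(u_{2}+v_{2})+h_{3}(x)(u_{1}^{2}+v_{1}^{2})$ and $4(1+2\alpha)(a_{3}-a_{2}^{2})=h_{2}(x)(u_{2}-v_{2})$, respectively.

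Substituting the expression for $u_{1}^{2}+v_{1}^{2}$ into the addition identity and inserting $h_{2}(x)=bx$, $h_{3}(x)=pbx^{2}+qa$ yields a closed formula
\begin{equation*}
a_{2}^{2} = \dfrac{[h_{2}(x)]^{3}(u_{2}+v_{2})}{2(1+\alpha)[h_{2}(x)]^{2}-2h_{3}(x)(1+\alpha)^{2}},
\end{equation*}
whose modulus, bounded by $|u_{2}|+|v_{2}|\leq 2$, delivers the stated estimate for $|a_{2}|$. From the subtraction identity $a_{3}=a_{2}^{2}+h_{2}(x)(u_{2}-v_{2})/[4(1+2\alpha)]$, combined with the $u_{1},v_{1}$-expression for $a_{2}^{2}$, the triangle inequality produces the bound on $|a_{3}|$. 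For the Fekete--Szeg\"{o} functional I would write
\begin{equation*}
a_{3}-\nu a_{2}^{2} = h_{2}(x)\!\left[\left(\Omega(\nu,x)+\dfrac{1}{4(1+2\alpha)}\right)u_{2}+\left(\Omega(\nu,x)-\dfrac{1}{4(1+2\alpha)}\right)v_{2}\right]
\end{equation*}
with $\Omega(\nu,x)=\dfrac{(1-\nu)[h_{2}(x)]^{2}}{2(1+\alpha)[h_{2}(x)]^{2}-2h_{3}(x)(1+\alpha)^{2}}$, and split according to whether $|\Omega(\nu,x)|\leq 1/[4(1+2\alpha)]$ or the reverse inequality holds, then translate this dichotomy into the stated threshold on $|\nu-1|$.

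The main obstacle is purely bookkeeping: the Mocanu combination produces the coefficients $(1+\alpha)$, $(2+4\alpha)$, $(1+3\alpha)$ and $(3+5\alpha)$ in place of the $(1+2\alpha)$, $2(1+3\alpha)$ and $(3+10\alpha)$ factors appearing in Theorem~\ref{Th3}, so care is needed to ensure that the correct factors propagate through the denominators of $a_{2}^{2}$ and of $\Omega(\nu,x)$ and into the final threshold $\tfrac{|[(1+\alpha)b-p(1+\alpha)^{2}]bx^{2}-qa(1+\alpha)^{2}|}{b^{2}x^{2}(2+4\alpha)}$. Beyond this, the argument is structurally identical to the proof of Theorem~\ref{Th3}.
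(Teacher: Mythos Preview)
Your proposal is correct and follows essentially the same approach as the paper's own proof: both introduce Schwarz functions $u,v$, expand the Mocanu combination on $f$ and $g=f^{-1}$ to obtain the four coefficient equations with factors $(1+\alpha)$, $2(1+2\alpha)$, $(1+3\alpha)$, $(3+5\alpha)$, then add/subtract the $z^{2}$ and $w^{2}$ relations and substitute to reach the identical formulas for $a_{2}^{2}$, $a_{3}$, and $\Omega(\nu,x)$. The only cosmetic difference is that you spell out the series expansions of $\tfrac{zf'(z)}{f(z)}$ and $1+\tfrac{zf''(z)}{f'(z)}$ explicitly, whereas the paper records the resulting coefficient identities directly.
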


\begin{proof}
	Let $f\in\mathcal{M}_{\Sigma}(\alpha,\; x)$ be given by Taylor-Maclaurin expansion \eqref{Int-e1}. Then, there are analytic functions $u$ and $v$ such that 
	\[
	u(0)=0;\quad v(0)=0,\quad \left|u(z)\right|<1 \quad \hbox{and} \quad \left|v(z)\right|<1 \quad (\forall\; z,\; w \in \Delta),
	\]
	we can write
	\begin{equation}  \label{2.2}
	(1-\alpha)\dfrac{zf'(z)}{f(z)}+
	\alpha\left(1+\dfrac{zf''(z)}{f'(z)}\right)
	= \Pi(x,\; u(z))+1-a
	\end{equation}
	and 
	\begin{equation}  \label{2.3}
	(1-\alpha)\dfrac{wg'(w)}{g(w)}+
	\alpha\left(1+\dfrac{wg''(w)}{g'(w)}\right)=\Pi(x,\; v(w))+1-a.
	\end{equation}
	Or, equivalently,
	\begin{eqnarray}  \label{2.2a}
	&&(1-\alpha)\dfrac{zf'(z)}{f(z)}+
	\alpha\left(1+\dfrac{zf''(z)}{f'(z)}\right) \nonumber\\
	&& \qquad\qquad = 1+h_{1}(x)-a+h_{2}(x)u(z)+h_{3}(x)[u(z)]^{2}+\ldots
	\end{eqnarray}
	and 
	\begin{eqnarray}  \label{2.3a}
	&&(1-\alpha)\dfrac{wg'(w)}{g(w)}+
	\alpha\left(1+\dfrac{wg''(w)}{g'(w)}\right)
	\nonumber \\ && \qquad\qquad = 1+h_{1}(x)-a+h_{2}(x)v(w)+h_{3}(x)[v(w)]^{2}+\ldots .
	\end{eqnarray}
	From  \eqref{2.2a} and \eqref{2.3a}, we obtain
	\begin{eqnarray}  \label{2.2b}
	&&(1-\alpha)\dfrac{zf'(z)}{f(z)}+
	\alpha\left(1+\dfrac{zf''(z)}{f'(z)}\right) \nonumber\\
	&& \qquad\qquad = 1+h_{2}(x)u_{1}z+[h_{2}(x)u_{2}+h_{3}(x)u_{1}^{2}]z^{2}+\ldots
	\end{eqnarray}
	and 
	\begin{eqnarray}  \label{2.3b}
	&&(1-\alpha)\dfrac{wg'(w)}{g(w)}+
	\alpha\left(1+\dfrac{wg''(w)}{g'(w)}\right)
	\nonumber \\ && \qquad\qquad = 1+h_{2}(x)v_{1}w+[h_{2}(x)v_{2}+h_{3}(x)v_{1}^{2}]w^{2}+\ldots .
	\end{eqnarray}
	It is fairly well known that 
	\[
	\left|u(z)\right| = \left|u_{1}z+u_{2}z^{2}+\ldots\right| <1 \qquad \hbox{and}\qquad 
	\left|v(z)\right| = \left|v_{1}w+v_{2}w^{2}+\ldots\right| <1,
	\]
	then
	\[
	\left|u_{k}\right| \leq 1 \qquad \hbox{and} \qquad \left|v_{k}\right| \leq 1 \qquad (k \in \mathbb{N}).
	\]
	Thus upon comparing the corresponding coefficients in \eqref{2.2b} and \eqref{2.3b}, we have 
	\begin{equation}  \label{2.6}
	\left(1+\alpha\right)a_{2}
	= h_{2}(x)u_{1}
	\end{equation}
	\begin{equation}  \label{2.7}
	2\left(1+2\alpha\right)a_{3} - \left(1+3\alpha\right)a_{2}^{2}
	= h_{2}(x)u_{2}+h_{3}(x)u_{1}^{2}
	\end{equation}
	
	\begin{equation}  \label{2.8}
	-\left(1+\alpha\right)a_{2}
	=h_{2}(x)v_{1}
	\end{equation}
	
	and 
	\begin{equation}  \label{2.9}
	\left(3+5\alpha\right)a_{2}^{2} - 2\left(1+2\alpha\right)a_{3}
	=h_{2}(x)v_{2}+h_{3}(x)v_{1}^{2}.
	\end{equation}
	
	From \eqref{2.6} and \eqref{2.8}, we can easily see that
	\begin{equation} \label{2.9a}
	u_{1}=-v_{1}
	\end{equation}
	and
	\begin{eqnarray} 
	2(1+\alpha)^{2} a_{2}^{2} &=& [h_{2}(x)]^{2}(u_{1}^{2} + v_{1}^{2})\nonumber\\
	a_{2}^2 &=& \dfrac{[h_{2}(x)]^{2}(u_{1}^{2} + v_{1}^{2})}{2(1+\alpha)^{2}}~.\label{2.10}
	\end{eqnarray}
	If we add   \eqref{2.7} to \eqref{2.9}, we get
	\begin{eqnarray}
	2\left(1+\alpha\right) a_{2}^{2}
	= h_{2}(x)(u_{2}+v_{2}) + h_{3}(x)(u_{1}^{2}+v_{1}^{2}).\label{2.11}
	\end{eqnarray}
	By substituting  \eqref{2.10} in \eqref{2.11}, we reduce that
	\begin{eqnarray}
	a_{2}^{2} &=& \dfrac{[h_{2}(x)]^{3}\left(u_{2}+v_{2}\right)}{2\left(1+\alpha\right)[h_{2}(x)]^{2}-2h_{3}(x)(1+\alpha)^{2}}\label{2.12}
	\end{eqnarray}
	which yields
	\begin{eqnarray}
	\left|a_{2}\right| &\leq& \dfrac{\left|bx\right|\sqrt{\left|bx\right|}}{\sqrt{\left|[\left(1+\alpha\right)b-p(1+\alpha)^{2}]bx^{2}-qa(1+\alpha)^{2}\right|}}.\label{2.13}
	\end{eqnarray}
	By subtracting \eqref{2.9} from \eqref{2.7} and in view of \eqref{2.9a} , we obtain
	\begin{eqnarray}
	4(1+2\alpha)a_{3} - 4(1+2\alpha) a_{2}^{2} &=& h_{2}(x)\left(u_{2}-v_{2}\right) + h_{3}(x)\left(u_{1}^{2}-v_{1}^{2}\right)\nonumber\\
	a_{3}&=& \dfrac{h_{2}(x)\left(u_{2}-v_{2}\right)}{4(1+2\alpha)}  + a_{2}^{2}.\label{2.14}
	\end{eqnarray}
	Then in view of \eqref{2.10}, \eqref{2.14} becomes
	\begin{eqnarray*}
		a_{3}&=& \dfrac{h_{2}(x)\left(u_{2}-v_{2}\right)}{4(1+2\alpha)}  + \dfrac{[h_{2}(x)]^{2}(u_{1}^{2} + v_{1}^{2})}{2(1+\alpha)^{2}}.\label{2.15}
	\end{eqnarray*}
	Applying \eqref{HP-ab}, we deduce that 
	\begin{eqnarray*}
		\left|a_{3}\right|&\leq& \dfrac{\left|bx\right|}{2+4\alpha}  + \dfrac{b^{2}x^{2}}{(1+\alpha)^{2}}.\label{2.16}
	\end{eqnarray*}
	From \eqref{2.14}, for $\nu\in \mathbb{R},$ we write
	\begin{eqnarray}\label{Th1-Fekete-e1}
	a_{3} -\nu a_{2}^{2} = \dfrac{h_{2}(x)\left(u_{2}-v_{2}\right)}{4(1+2\alpha)} + \left(1-\nu\right)a_{2}^{2}.
	\end{eqnarray}
	By substituting \eqref{2.12} in \eqref{Th1-Fekete-e1}, we have 
	\begin{eqnarray}\label{Th1-Fekete-e2}
	a_{3} -\nu a_{2}^{2} 
	&=& \dfrac{h_{2}(x)\left(u_{2}-v_{2}\right)}{4(1+2\alpha)}+  \left(\dfrac{\left(1-\nu\right)[h_{2}(x)]^{3}\left(u_{2}+v_{2}\right)}{2 \left(1+\alpha\right)[h_{2}(x)]^{2}-2h_{3}(x)(1+\alpha)^{2}}\right)\nonumber\\
	&=& h_{2}(x)\left\{\left(\Omega(\nu,\; x) + \dfrac{1}{4\left(1+2\alpha\right)}\right)u_{2}
	+ \left(\Omega(\nu,\; x) - \dfrac{1}{4\left(1+2\alpha\right)}\right)v_{2}\right\},\nonumber\\
	\end{eqnarray}
	where
	\begin{eqnarray*}
		\Omega(\nu,\; x) = \dfrac{\left(1-\nu\right)[h_{2}(x)]^{2}}{2\left(1+\alpha\right)[h_{2}(x)]^{2}-2h_{3}(x)(1+\alpha)^{2}}.
	\end{eqnarray*}
	Hence, in view of \eqref{HP-ab}, we conclude that 
	\begin{eqnarray*}
		\left|a_{3}-\nu a_{2}^{2}\right|
		\leq \left\{
		\begin{array}{ll}
			\dfrac{\left|h_{2}(x)\right|}{2+4\alpha} &; 
			0 \leq \left|\Omega(\nu,\; x)\right|\leq \dfrac{1}{4\left(1+2\alpha\right)}\\\\
			2\left|h_{2}(x)\right|\left|\Omega(\nu,\; x)\right| &; 
			\left|\Omega(\nu,\; x)\right|\geq \dfrac{1}{4\left(1+2\alpha\right)}
		\end{array}		
		\right.
	\end{eqnarray*}
	which evidently completes the proof of Theorem \ref{Th1}.
\end{proof}
For $\alpha=1,$ Theorem \ref{Th1} readily yields the following coefficient estimates for $\mathcal{K}_{\Sigma}(x).$

\begin{corollary}
	\label{cor2.2} Let $f(z)=z+\sum\limits_{n=2}^{\infty}a_{n}z^{n}$ be in the
	class $\mathcal{K}_{\Sigma}(x)$. Then 
	\begin{align*}
	\left|a_{2}\right| &\leq \dfrac{\left|bx\right|\sqrt{\left|bx\right|}}{\sqrt{\left|[2b-4p]bx^{2}-4qa\right|}},
	\qquad \hbox{and} \qquad
	\left|a_{3}\right|\leq \dfrac{\left|bx\right|}{6} + \dfrac{b^{2}x^{2}}{4}
	\end{align*}
	and for $\nu \in \mathbb{R}$
	\begin{eqnarray*}
		\left|a_{3}-\nu a_{2}^{2}\right|
		\leq \left\{
		\begin{array}{ll}
			\dfrac{\left|bx\right|}{6} &
			~\hbox{if}~\left|\nu - 1\right|\leq \dfrac{\left|[2b-4p]bx^{2}-4qa\right|}{6b^{2}x^{2}}\\\\
			\dfrac{\left|bx\right|^{3}\left|\nu - 1\right|}{\left|[2b-4p]bx^{2}-4qa\right|} &
			~\hbox{if}~	\left|\nu - 1\right|\geq \dfrac{\left|[2b-4p]bx^{2}-4qa\right|}{6b^{2}x^{2}}.
		\end{array}		
		\right.
	\end{eqnarray*}
\end{corollary}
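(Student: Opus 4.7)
The plan is to derive Corollary \ref{cor2.2} as a direct specialization of Theorem \ref{Th1} with $\alpha = 1$. From the paragraph introducing $\mathcal{M}_{\Sigma}(\alpha, x)$, the class $\mathcal{K}_{\Sigma}(x)$ of bi-convex functions coincides with $\mathcal{M}_{\Sigma}(1, x)$: setting $\alpha = 1$ in the two defining subordinations collapses them to $1+\tfrac{zf''(z)}{f'(z)} \prec \Pi(x,z)+1-a$ and the analogue for $g = f^{-1}$, which is exactly the bi-convex condition. Hence every $f \in \mathcal{K}_{\Sigma}(x)$ satisfies the hypotheses of Theorem \ref{Th1} at $\alpha = 1$, and the three conclusions of that theorem apply verbatim.

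What remains is arithmetic substitution. With $\alpha = 1$ one has $1+\alpha = 2$, $(1+\alpha)^2 = 4$, and $2+4\alpha = 6$, so the three quantities appearing in Theorem \ref{Th1} simplify as
\begin{align*}
\bigl|[(1+\alpha)b - p(1+\alpha)^{2}]bx^{2} - qa(1+\alpha)^{2}\bigr|
&= \bigl|[2b-4p]bx^{2} - 4qa\bigr|, \\
\dfrac{|bx|}{2+4\alpha} + \dfrac{b^{2}x^{2}}{(1+\alpha)^{2}}
&= \dfrac{|bx|}{6} + \dfrac{b^{2}x^{2}}{4},
\end{align*}
and the Fekete--Szeg\"o threshold transforms as
\[
\dfrac{\bigl|[(1+\alpha)b - p(1+\alpha)^{2}]bx^{2} - qa(1+\alpha)^{2}\bigr|}{b^{2}x^{2}(2+4\alpha)}
= \dfrac{\bigl|[2b-4p]bx^{2} - 4qa\bigr|}{6\,b^{2}x^{2}}.
\]
Substituting these into the bounds for $|a_{2}|$, $|a_{3}|$, and $|a_{3} - \nu a_{2}^{2}|$ in Theorem \ref{Th1} reproduces the three inequalities claimed in the corollary.

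There is essentially no obstacle here, since the work is a clean numerical specialization of a theorem already proved earlier in the section. The only point worth verifying carefully is the identification $\mathcal{K}_{\Sigma}(x) \equiv \mathcal{M}_{\Sigma}(1, x)$, which is immediate from the two subordination definitions; beyond that, the corollary is a formal consequence. I would write the proof in one line, stating that putting $\alpha = 1$ in Theorem \ref{Th1} yields the desired estimates.
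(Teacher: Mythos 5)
Your proposal is correct and matches the paper exactly: the paper derives Corollary \ref{cor2.2} by simply setting $\alpha=1$ in Theorem \ref{Th1}, using the identification $\mathcal{K}_{\Sigma}(x)\equiv\mathcal{M}_{\Sigma}(1,\;x)$, and your arithmetic substitutions ($1+\alpha=2$, $(1+\alpha)^{2}=4$, $2+4\alpha=6$) reproduce all three stated bounds.
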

In view of Remark \ref{Rem-Che-P}, Theorem \ref{Th1} can be shown to yield
the following result.
\begin{corollary}
	\label{Th1-Cor} Let $f(z)=z+\sum\limits_{n=2}^{\infty}a_{n}z^{n}$ be in the
	class $\mathcal{M}_{\Sigma}(\alpha,\; t)$. Then 
	\begin{align*}
	\left|a_{2}\right| &\leq \dfrac{2t\sqrt{2t}}{\sqrt{\left|(1+\alpha)^{2}-4\alpha(1+\alpha)t^{2}\right|}},
	\qquad \hbox{and} \qquad
	\left|a_{3}\right|\leq \dfrac{t}{1+2\alpha}  + \dfrac{4t^{2}}{(1+\alpha)^{2}}
	\end{align*}
	and for $\nu \in \mathbb{R}$
	\begin{eqnarray*}
		\left|a_{3}-\nu a_{2}^{2}\right|
		\leq \left\{
		\begin{array}{ll}
			\dfrac{t}{1+2\alpha} &~\hbox{if}~
			\left|\nu - 1\right|\leq \dfrac{\left|(1+\alpha)^{2}-4\alpha(1+\alpha)t^{2}\right|}{8t^{2}\left(1+2\alpha\right)}\\\\
			\dfrac{8t^{3}\left|\nu - 1\right|}{\left|(1+\alpha)^{2}-4\alpha(1+\alpha)t^{2}\right|} &~\hbox{if}~
			\left|\nu - 1\right|\geq \dfrac{\left|(1+\alpha)^{2}-4\alpha(1+\alpha)t^{2}\right|}{8t^{2}\left(1+2\alpha\right)}.
		\end{array}		
		\right.
	\end{eqnarray*}
\end{corollary}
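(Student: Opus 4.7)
The plan is to obtain Corollary \ref{Th1-Cor} as a direct specialization of Theorem \ref{Th1} via the parameter identification recorded in Remark \ref{Rem-Che-P}. There is no new analytic input required: membership in the Chebyshev-based class $\mathcal{M}_{\Sigma}(\alpha,\,t)$ is literally membership in $\mathcal{M}_{\Sigma}(\alpha,\,x)$ for the parameters $a=1$, $b=p=2$, $q=-1$ with $x$ relabelled as $t$, because by Remark \ref{Rem-Che-P} these choices collapse the Horadam generating function $\Pi(x,z)$ into the generating function for the Chebyshev polynomials $U_n(t)$ of the second kind. Consequently, the three estimates of Theorem \ref{Th1} hold verbatim, and the task is only to plug in and simplify.

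First I would compute the numerator quantity, which is immediate: $|bx|\sqrt{|bx|} = 2t\sqrt{2t}$, and $b^2 x^2 = 4t^2$. Next I would handle the recurring denominator expression $\bigl|[(1+\alpha)b - p(1+\alpha)^{2}]\,bx^{2} - qa(1+\alpha)^{2}\bigr|$: with the chosen parameters, $(1+\alpha)b - p(1+\alpha)^{2} = 2(1+\alpha) - 2(1+\alpha)^{2} = -2\alpha(1+\alpha)$, so the bracketed term in $x^2$ equals $-4\alpha(1+\alpha)t^{2}$, while $qa(1+\alpha)^{2} = -(1+\alpha)^{2}$. Combining, the whole expression inside the absolute value reduces to $(1+\alpha)^{2} - 4\alpha(1+\alpha)t^{2}$, exactly matching the denominator that appears in every bound of the corollary.

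Inserting these simplifications into the three bounds of Theorem \ref{Th1} produces the stated results: the bound on $|a_{2}|$ follows directly; for $|a_{3}|$, the pieces $\frac{|bx|}{2+4\alpha} = \frac{t}{1+2\alpha}$ and $\frac{b^{2}x^{2}}{(1+\alpha)^{2}} = \frac{4t^{2}}{(1+\alpha)^{2}}$ combine to the claimed expression; and for the Fekete–Szeg\"o dichotomy, the threshold on $|\nu-1|$ simplifies to $\frac{|(1+\alpha)^{2} - 4\alpha(1+\alpha)t^{2}|}{8t^{2}(1+2\alpha)}$ (since $b^{2}x^{2}(2+4\alpha) = 8t^{2}(1+2\alpha)$), the small-$\nu$ branch gives the constant $\frac{t}{1+2\alpha}$, and the large-$\nu$ branch uses $|bx|^{3} = 8t^{3}$ to yield $\frac{8t^{3}|\nu-1|}{|(1+\alpha)^{2} - 4\alpha(1+\alpha)t^{2}|}$.

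There is no genuine obstacle here, since the entire argument is substitution followed by algebraic simplification. The only step where a slip is likely is the factorization $2(1+\alpha) - 2(1+\alpha)^{2} = -2\alpha(1+\alpha)$, which must be performed before multiplying by $bx^{2}$; tracking signs carefully there guarantees that the final denominator emerges in the clean form $(1+\alpha)^{2} - 4\alpha(1+\alpha)t^{2}$ rather than an equivalent but messier expression.
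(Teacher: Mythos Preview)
Your proposal is correct and follows exactly the paper's own route: the paper states Corollary \ref{Th1-Cor} immediately after Theorem \ref{Th1} with the line ``In view of Remark \ref{Rem-Che-P}, Theorem \ref{Th1} can be shown to yield the following result,'' i.e., it too obtains the corollary by the Chebyshev specialization $a=1$, $b=p=2$, $q=-1$, $x\to t$. Your algebraic simplifications (in particular the factorization $2(1+\alpha)-2(1+\alpha)^{2}=-2\alpha(1+\alpha)$ and the resulting denominator $(1+\alpha)^{2}-4\alpha(1+\alpha)t^{2}$) are all accurate.
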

In view of Remark \ref{Rem-Che-P}, Theorem \ref{cor2.2} can be shown to yield
the following result.
\begin{corollary}
	\label{cor2.2-Cor} Let $f(z)=z+\sum\limits_{n=2}^{\infty}a_{n}z^{n}$ be in the
	class $\mathcal{K}_{\Sigma}(t)$. Then 
	\begin{align*}
	\left|a_{2}\right| &\leq \dfrac{t\sqrt{2t}}{\sqrt{\left|1-2t^{2}\right|}},
	\qquad \hbox{and} \qquad
	\left|a_{3}\right|\leq \dfrac{t}{3} + t^{2}
	\end{align*}
	and for $\nu \in \mathbb{R}$
	\begin{eqnarray*}
		\left|a_{3}-\nu a_{2}^{2}\right|
		\leq \left\{
		\begin{array}{ll}
			\dfrac{t}{3} &
			~\hbox{if}~\left|\nu - 1\right|\leq \dfrac{\left|1-2t^{2}\right|}{6t^{2}}\\\\
			\dfrac{2t^{3}\left|\nu - 1\right|}{\left|1-2t^{2}\right|} &
			~\hbox{if}~	\left|\nu - 1\right|\geq \dfrac{\left|1-2t^{2}\right|}{6t^{2}}.
		\end{array}		
		\right.
	\end{eqnarray*}
\end{corollary}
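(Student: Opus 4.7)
The plan is simply to specialize Corollary \ref{cor2.2} (the $\alpha=1$ case, which already gives the bounds for $\mathcal{K}_{\Sigma}(x)$ in terms of the general Horadam parameters) to the Chebyshev parameters of Remark \ref{Rem-Che-P}. According to that remark, the class $\mathcal{K}_{\Sigma}(t)$ corresponds to taking $a=1$, $b=p=2$, $q=-1$, and writing $x\rightarrow t$, so the entire argument is a routine substitution followed by algebraic simplification. No new inequalities, subordinations, or applications of the Schwarz lemma are needed, since the analytic content has already been extracted in Theorem \ref{Th1}.

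First I would substitute $|bx| = 2t$, hence $|bx|\sqrt{|bx|} = 2t\sqrt{2t}$ and $b^{2}x^{2} = 4t^{2}$, into the two coefficient bounds of Corollary \ref{cor2.2}. The only nontrivial simplification lies in the denominator under the square root in the estimate for $|a_{2}|$: one computes
\begin{equation*}
[2b-4p]bx^{2} - 4qa = (4-8)\cdot 2 t^{2} - 4(-1)(1) = -8t^{2} + 4 = 4(1-2t^{2}),
\end{equation*}
so $\sqrt{\left|[2b-4p]bx^{2}-4qa\right|} = 2\sqrt{|1-2t^{2}|}$, and the factor of $2$ cancels against the numerator $2t\sqrt{2t}$ to give $|a_{2}| \leq t\sqrt{2t}/\sqrt{|1-2t^{2}|}$. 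For $|a_{3}|$ the simplification is immediate: $|bx|/6 = t/3$ and $b^{2}x^{2}/4 = t^{2}$.

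For the Fekete--Szeg\"{o} part, the threshold $|[2b-4p]bx^{2}-4qa|/(2b^{2}x^{2}\cdot\dots)$ becomes, using the same computation, $4|1-2t^{2}|/(24 t^{2}) = |1-2t^{2}|/(6t^{2})$, which matches the statement. In the second branch, the bound $|bx|^{3}|\nu-1|/|[2b-4p]bx^{2}-4qa|$ reduces to $8t^{3}|\nu-1|/(4|1-2t^{2}|) = 2t^{3}|\nu-1|/|1-2t^{2}|$, again as claimed.

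The only potential obstacle is bookkeeping of signs and constants in the expression $[2b-4p]bx^{2} - 4qa$, since $q=-1$ and $2b-4p$ are both negative individually but the combination is positive for small $t$; I would double-check that the absolute value is handled correctly, but otherwise the corollary follows entirely by plugging the Chebyshev values into Corollary \ref{cor2.2}.
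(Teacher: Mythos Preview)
Your proposal is correct and matches the paper's own approach exactly: the paper simply notes that, in view of Remark~\ref{Rem-Che-P}, Corollary~\ref{cor2.2} specializes to this result, and your substitution of $a=1$, $b=p=2$, $q=-1$, $x\to t$ together with the routine simplifications you carry out is precisely what is intended.
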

\begin{remark}\label{Rem-MC-Re}
	The results obtained in Corollary \ref{Th1-Cor} and Corollary \ref{cor2.2-Cor} are coincide with results of Alt\i nkaya and Yal\c cin \cite{SA-SY-2017-arXiv}.	
\end{remark}
	Next, a function $f\in \Sigma$ of the form \eqref{Int-e1} belongs to the class $\mathcal{L}_{\Sigma}(\alpha,\; x),$  $0\leqq\lambda\leqq 1,$  and $z,\; w \in \Delta,$ if the following conditions are satisfied: 
	\begin{equation*} \label{CR-RAP-NM-P1-e1}
	\left(\dfrac{zf'(z)}{f(z)}\right)^{\alpha}\left(1+\dfrac{zf''(z)}{f'(z)}\right)^{1-\alpha} \prec \Pi(x,\; z)+1-a
	\end{equation*}
	and for $g(w)=f^{-1}(w)$ 
	\begin{equation*}  \label{CR-RAP-NM-P1-e2}
	\left(\dfrac{wg'(w)}{g(w)}\right)^{\alpha}\left(1+\dfrac{wg''(w)}{g'(w)}\right)^{1-\alpha} \prec \Pi(x,\; w) +1-a,
	\end{equation*}
	where the real constants $a$ and $b$ are as in \eqref{HP-ab}.

This class also reduces to $\mathcal{S}_{\Sigma}^{\ast}(x)$ and $\mathcal{K}_{\Sigma}(x)$.  Further, as we have discussed in Remark \ref{Rem-Positive}, we can define many subclasses for the expression $\left(\dfrac{zf'(z)}{f(z)}\right)^{\alpha}\left(1+\dfrac{zf''(z)}{f'(z)}\right)^{1-\alpha}.$  
In view of Remark \ref{Rem-Che-P}, the bi-univalent function class  $\mathcal{L}_{\Sigma}^{\ast}(\alpha,\; x)$ would become the class $\mathcal{L}_{\Sigma}^{\ast}(\alpha,\; t)$. For functions in the class $\mathcal{L}_{\Sigma}(\alpha,\; x)$, the following coefficient estimates are obtained. 

\begin{theorem}
	\label{Th2} Let $f(z)=z+\sum\limits_{n=2}^{\infty}a_{n}z^{n}$ be in the
	class $\mathcal{L}_{\Sigma}(\alpha,\; x)$. Then 
	\begin{align*}
	\left|a_{2}\right| &\leq \dfrac{\left|bx\right|\sqrt{2\left|bx\right|}}{\sqrt{\left|[\left(\alpha^{2}-3\alpha+4\right)b-2p(2-\alpha)^{2}]bx^{2}-2qa(2-\alpha)^{2}\right|}}
	\quad \hbox{and} \quad
	\left|a_{3}\right|\leq \dfrac{\left|bx\right|}{6-4\alpha}  + \dfrac{b^{2}x^{2}}{(2-\alpha)^{2}}
	\end{align*}
	and for $\nu \in \mathbb{R}$
	\begin{eqnarray*}
		\left|a_{3}-\nu a_{2}^{2}\right|
		\leq \left\{
		\begin{array}{ll}
			\dfrac{\left|bx\right|}{6-4\alpha} &\\
			\qquad \hbox{if} \qquad 
			\left|\nu - 1\right|\leq \dfrac{\left|[\left(\alpha^{2}-3\alpha+4\right)b-2p(2-\alpha)^{2}]bx^{2}-2qa(2-\alpha)^{2}\right|}{4b^{2}x^{2}\left(3-2\alpha\right)}\\\\
			\dfrac{2\left|bx\right|^{3}\left|\nu - 1\right|}{\left|[\left(\alpha^{2}-3\alpha+4\right)b-2p(2-\alpha)^{2}]bx^{2}-2qa(2-\alpha)^{2}\right|} &
			\\	\qquad \hbox{if} \qquad  
			\left|\nu - 1\right|\geq \dfrac{\left|[\left(\alpha^{2}-3\alpha+4\right)b-2p(2-\alpha)^{2}]bx^{2}-2qa(2-\alpha)^{2}\right|}{4b^{2}x^{2}\left(3-2\alpha\right)}.
		\end{array}		
		\right.
	\end{eqnarray*}
\end{theorem}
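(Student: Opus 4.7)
The plan is to parallel the proofs of Theorem \ref{Th3} and Theorem \ref{Th1}, adapting the calculations to the defining operator $\left(\tfrac{zf'(z)}{f(z)}\right)^{\alpha}\!\left(1+\tfrac{zf''(z)}{f'(z)}\right)^{1-\alpha}$. First, using the subordinations I introduce Schwarz functions $u(z)=u_1 z+u_2 z^2+\cdots$ and $v(w)=v_1 w+v_2 w^2+\cdots$, so that the two defining conditions become equalities whose right-hand sides, via \eqref{HP-GF}, take the form $1+h_2(x)u_1 z+[h_2(x)u_2+h_3(x)u_1^2]z^2+\cdots$ and similarly with $v$ in place of $u$.

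Next I compute the Taylor expansions of $\tfrac{zf'(z)}{f(z)}$ and $1+\tfrac{zf''(z)}{f'(z)}$ through order $z^2$, raise each to its fractional power via the binomial series $(1+\xi)^{\gamma}=1+\gamma\xi+\tfrac{\gamma(\gamma-1)}{2}\xi^2+\cdots$, and multiply. After collecting terms one obtains
\begin{equation*}
\left(\tfrac{zf'(z)}{f(z)}\right)^{\alpha}\!\left(1+\tfrac{zf''(z)}{f'(z)}\right)^{1-\alpha}
=1+(2-\alpha)a_2 z+\left[(6-4\alpha)a_3+\tfrac{1}{2}(\alpha^2+5\alpha-8)a_2^2\right]z^2+\cdots,
\end{equation*}
and, evaluating the same expression at $g(w)=w-a_2 w^2+(2a_2^2-a_3)w^3+\cdots$, I get $-(2-\alpha)a_2$ at $w$ and $-(6-4\alpha)a_3+\tfrac{1}{2}(\alpha^2-11\alpha+16)a_2^2$ at $w^2$. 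Matching coefficients against the Horadam expansions produces four equations analogous to \eqref{2.6}--\eqref{2.9}.

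From the $a_2$-pair I deduce $u_1=-v_1$ and $2(2-\alpha)^2 a_2^2=[h_2(x)]^2(u_1^2+v_1^2)$. Adding the $a_3$-pair the $a_3$-terms cancel, and the coefficient of $a_2^2$ telescopes to $\alpha^2-3\alpha+4$, giving
\begin{equation*}
a_2^2=\frac{[h_2(x)]^3(u_2+v_2)}{(\alpha^2-3\alpha+4)[h_2(x)]^2-2(2-\alpha)^2 h_3(x)}.
\end{equation*}
Inserting $h_2(x)=bx$, $h_3(x)=pbx^2+qa$ and $|u_k|,|v_k|\le 1$ yields the stated bound on $|a_2|$. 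Subtracting the $a_3$-pair (using $u_1=-v_1$) gives $a_3=a_2^2+\tfrac{h_2(x)(u_2-v_2)}{4(3-2\alpha)}$; combining this with $|a_2|^2\le b^2 x^2/(2-\alpha)^2$ via the triangle inequality produces the bound for $|a_3|$.

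For the Fekete-Szegö inequality I rewrite $a_3-\nu a_2^2=(1-\nu)a_2^2+\tfrac{h_2(x)(u_2-v_2)}{4(3-2\alpha)}$, substitute the formula for $a_2^2$, and reorganize as $h_2(x)\bigl[(\Omega(\nu,x)+\tfrac{1}{4(3-2\alpha)})u_2+(\Omega(\nu,x)-\tfrac{1}{4(3-2\alpha)})v_2\bigr]$ with $\Omega(\nu,x)=\tfrac{(1-\nu)[h_2(x)]^2}{(\alpha^2-3\alpha+4)[h_2(x)]^2-2(2-\alpha)^2 h_3(x)}$; the two-branch bound follows by comparing $|\Omega(\nu,x)|$ with $\tfrac{1}{4(3-2\alpha)}$. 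I expect the main obstacle to be the bookkeeping in expanding the product of fractional powers: the coefficients $\tfrac{1}{2}(\alpha^2+5\alpha-8)$ and $\tfrac{1}{2}(\alpha^2-11\alpha+16)$ are exactly where sign errors or missed binomial terms would propagate into the wrong denominators in the final estimates.
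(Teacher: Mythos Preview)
Your proposal is correct and follows essentially the same approach as the paper's own proof: introduce Schwarz functions $u,v$, expand the operator $\bigl(\tfrac{zf'(z)}{f(z)}\bigr)^{\alpha}\bigl(1+\tfrac{zf''(z)}{f'(z)}\bigr)^{1-\alpha}$ to order $z^{2}$ for both $f$ and $g=f^{-1}$, match coefficients against the Horadam expansion, then add and subtract the resulting equations to isolate $a_{2}^{2}$ and $a_{3}-a_{2}^{2}$ before applying $|u_{k}|,|v_{k}|\le 1$. In particular, your coefficients $\tfrac{1}{2}(\alpha^{2}+5\alpha-8)$ and $\tfrac{1}{2}(\alpha^{2}-11\alpha+16)$ coincide (after simplification) with the paper's expressions $\tfrac{1}{2}\bigl[(\alpha-2)^{2}-3(4-3\alpha)\bigr]$ and $8(1-\alpha)+\tfrac{\alpha}{2}(\alpha+5)$, so the sum $\alpha^{2}-3\alpha+4$ and the subsequent bounds agree verbatim.
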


\begin{proof}
	Let $f\in\mathcal{L}_{\Sigma}(\alpha,\; x)$ be given by Taylor-Maclaurin expansion \eqref{Int-e1}. Then, there are analytic functions $u$ and $v$ such that 
	\[
	u(0)=0;\quad v(0)=0,\quad \left|u(z)\right|<1 \quad \hbox{and} \quad \left|v(z)\right|<1 \quad (\forall\; z,\; w \in \Delta),
	\]
	we can write
	\begin{equation}  \label{3.2}
	\left(\dfrac{zf'(z)}{f(z)}\right)^{\alpha}\left(1+\dfrac{zf''(z)}{f'(z)}\right)^{1-\alpha} 
	= \Pi(x,\; u(z))+1-a
	\end{equation}
	and 
	\begin{equation}  \label{3.3}
	\left(\dfrac{wg'(w)}{g(w)}\right)^{\alpha}\left(1+\dfrac{wg''(w)}{g'(w)}\right)^{1-\alpha}
	=\Pi(x,\; v(w))+1-a.
	\end{equation}
	Or, equivalently,
	\begin{eqnarray}  \label{3.2a}
	&&\left(\dfrac{zf'(z)}{f(z)}\right)^{\alpha}\left(1+\dfrac{zf''(z)}{f'(z)}\right)^{1-\alpha}   \nonumber\\
	&& \qquad\qquad = 1+h_{1}(x)-a+h_{2}(x)u(z)+h_{3}(x)[u(z)]^{2}+\ldots
	\end{eqnarray}
	and 
	\begin{eqnarray}  \label{3.3a}
	&&\left(\dfrac{wg'(w)}{g(w)}\right)^{\alpha}\left(1+\dfrac{wg''(w)}{g'(w)}\right)^{1-\alpha}
	\nonumber \\ && \qquad\qquad = 1+h_{1}(x)-a+h_{2}(x)v(w)+h_{3}(x)[v(w)]^{2}+\ldots .
	\end{eqnarray}
	From  \eqref{3.2a} and \eqref{3.3a}, we obtain
	\begin{eqnarray}  \label{3.2b}
	\left(\dfrac{zf'(z)}{f(z)}\right)^{\alpha}\left(1+\dfrac{zf''(z)}{f'(z)}\right)^{1-\alpha} = 1+h_{2}(x)u_{1}z+[h_{2}(x)u_{2}+h_{3}(x)u_{1}^{2}]z^{2}+\ldots
	\end{eqnarray}
	and 
	\begin{eqnarray}  \label{3.3b}
	\left(\dfrac{wg'(w)}{g(w)}\right)^{\alpha}\left(1+\dfrac{wg''(w)}{g'(w)}\right)^{1-\alpha} = 1+h_{2}(x)v_{1}w+[h_{2}(x)v_{2}+h_{3}(x)v_{1}^{2}]w^{2}+\ldots .
	\end{eqnarray}
	It is fairly well known that 
	\[
	\left|u(z)\right| = \left|u_{1}z+u_{2}z^{2}+\ldots\right| <1 \qquad \hbox{and}\qquad 
	\left|v(z)\right| = \left|v_{1}w+v_{2}w^{2}+\ldots\right| <1,
	\]
	then
	\[
	\left|u_{k}\right| \leq 1 \qquad \hbox{and} \qquad \left|v_{k}\right| \leq 1 \qquad (k \in \mathbb{N}).
	\]
	Thus upon comparing the corresponding coefficients in \eqref{3.2b} and \eqref{3.3b}, we have 
	\begin{equation}  \label{3.6}
	\left(2-\alpha\right)a_{2}
	= h_{2}(x)u_{1}
	\end{equation}
	\begin{equation}  \label{3.7}
	2\left(3-2\alpha\right)a_{3} + \left[\left(\alpha-2\right)^{2}-3\left(4-3\alpha\right)\right]\dfrac{a_{2}^{2}}{2}
	= h_{2}(x)u_{2}+h_{3}(x)u_{1}^{2}
	\end{equation}
	
	\begin{equation}  \label{3.8}
	-\left(2-\alpha\right)a_{2}
	=h_{2}(x)v_{1}
	\end{equation}
	
	and 
	\begin{equation}  \label{3.9}
	\left[8\left(1-\alpha\right)+\dfrac{\alpha}{2}\left(\alpha+5\right)\right]a_{2}^{2} - 2\left(3-2\alpha\right)a_{3}
	=h_{2}(x)v_{2}+h_{3}(x)v_{1}^{2}.
	\end{equation}
	
	From \eqref{3.6} and \eqref{3.8}, we can easily see that
	\begin{equation} \label{3.9a}
	u_{1}=-v_{1}
	\end{equation}
	and
	\begin{eqnarray} 
	2(2-\alpha)^{2} a_{2}^{2} &=& [h_{2}(x)]^{2}(u_{1}^{2} + v_{1}^{2})\nonumber\\
	a_{2}^2 &=& \dfrac{[h_{2}(x)]^{2}(u_{1}^{2} + v_{1}^{2})}{2(2-\alpha)^{2}}~.\label{3.10}
	\end{eqnarray}
	If we add   \eqref{3.7} to \eqref{3.9}, we get
	\begin{eqnarray}
	\left(\alpha^{2}-3\alpha+4\right) a_{2}^{2}
	= h_{2}(x)(u_{2}+v_{2}) + h_{3}(x)(u_{1}^{2}+v_{1}^{2}).\label{3.11}
	\end{eqnarray}
	By substituting  \eqref{3.10} in \eqref{3.11}, we reduce that
	\begin{eqnarray}
	a_{2}^{2} &=& \dfrac{[h_{2}(x)]^{3}\left(u_{2}+v_{2}\right)}{\left(\alpha^{2}-3\alpha+4\right)[h_{2}(x)]^{2}-2h_{3}(x)(2-\alpha)^{2}}\label{3.12}
	\end{eqnarray}
	which yields
	\begin{eqnarray}
	\left|a_{2}\right| &\leq& \dfrac{\left|bx\right|\sqrt{2\left|bx\right|}}{\sqrt{\left|[\left(\alpha^{2}-3\alpha+4\right)b-2p(2-\alpha)^{2}]bx^{2}-2qa(2-\alpha)^{2}\right|}}.\label{3.13}
	\end{eqnarray}
	By subtracting \eqref{3.9} from \eqref{3.7} and in view of \eqref{3.9a} , we obtain
	\begin{eqnarray}
	4(3-2\alpha)a_{3} - 4(3-2\alpha) a_{2}^{2} &=& h_{2}(x)\left(u_{2}-v_{2}\right) + h_{3}(x)\left(u_{1}^{2}-v_{1}^{2}\right)\nonumber\\
	a_{3}&=& \dfrac{h_{2}(x)\left(u_{2}-v_{2}\right)}{4(3-2\alpha)}  + a_{2}^{2}.\label{3.14}
	\end{eqnarray}
	Then in view of \eqref{3.10}, \eqref{3.14} becomes
	\begin{eqnarray*}
		a_{3}&=& \dfrac{h_{2}(x)\left(u_{2}-v_{2}\right)}{4(3-2\alpha)}  + \dfrac{[h_{2}(x)]^{2}(u_{1}^{2} + v_{1}^{2})}{2(2-\alpha)^{2}}.\label{3.15}
	\end{eqnarray*}
	Applying \eqref{HP-ab}, we deduce that 
	\begin{eqnarray*}
		\left|a_{3}\right|&\leq& \dfrac{\left|bx\right|}{6-4\alpha}  + \dfrac{b^{2}x^{2}}{(2-\alpha)^{2}}.\label{3.16}
	\end{eqnarray*}
	From \eqref{3.14}, for $\nu\in \mathbb{R},$ we write
	\begin{eqnarray}\label{Th1-Fekete-e1}
	a_{3} -\nu a_{2}^{2} = \dfrac{h_{2}(x)\left(u_{2}-v_{2}\right)}{4(3-2\alpha)} + \left(1-\nu\right)a_{2}^{2}.
	\end{eqnarray}
	By substituting \eqref{3.12} in \eqref{Th1-Fekete-e1}, we have 
	\begin{eqnarray}\label{Th1-Fekete-e2}
	a_{3} -\nu a_{2}^{2} 
	&=& \dfrac{h_{2}(x)\left(u_{2}-v_{2}\right)}{4(3-2\alpha)}+  \left(\dfrac{\left(1-\nu\right)[h_{2}(x)]^{3}\left(u_{2}+v_{2}\right)}{\left(\alpha^{2}-3\alpha+4\right)[h_{2}(x)]^{2}-2h_{3}(x)(2-\alpha)^{2}}\right)\nonumber\\
	&=& h_{2}(x)\left\{\left(\Omega(\nu,\; x) + \dfrac{1}{4\left(3-2\alpha\right)}\right)u_{2}
	+ \left(\Omega(\nu,\; x) - \dfrac{1}{4\left(3-2\alpha\right)}\right)v_{2}\right\},\nonumber\\
	\end{eqnarray}
	where
	\begin{eqnarray*}
		\Omega(\nu,\; x) = \dfrac{\left(1-\nu\right)[h_{2}(x)]^{2}}{\left(\alpha^{2}-3\alpha+4\right)[h_{2}(x)]^{2}-2h_{3}(x)(2-\alpha)^{2}}.
	\end{eqnarray*}
	Hence, in view of \eqref{HP-ab}, we conclude that 
	\begin{eqnarray*}
		\left|a_{3}-\nu a_{2}^{2}\right|
		\leq \left\{
		\begin{array}{ll}
			\dfrac{\left|h_{2}(x)\right|}{6-4\alpha} &; 
			0 \leq \left|\Omega(\nu,\; x)\right|\leq \dfrac{1}{4\left(3-2\alpha\right)}\\
			2\left|h_{2}(x)\right|\left|\Omega(\nu,\; x)\right| &; 
			\left|\Omega(\nu,\; x)\right|\geq \dfrac{1}{4\left(3-2\alpha\right)}
		\end{array}		
		\right.
	\end{eqnarray*}
	which evidently completes the proof of Theorem \ref{Th1}.
\end{proof}

In view of Remark \ref{Rem-Che-P}, Theorem \ref{Th2} can be shown to yield

\begin{corollary}
	\label{Th2-Cor} Let $f(z)=z+\sum\limits_{n=2}^{\infty}a_{n}z^{n}$ be in the
	class $\mathcal{L}_{\Sigma}(\alpha,\; t)$. Then 
	\begin{align*}
	\left|a_{2}\right| &\leq \dfrac{2t\sqrt{2t}}{\sqrt{\left|(2-\alpha)^{2}-\left(\alpha^{2}-5\alpha+4\right)t^{2}\right|}}
	\quad \hbox{and} \quad
	\left|a_{3}\right|\leq \dfrac{t}{3-2\alpha}  + \dfrac{4t^{2}}{(2-\alpha)^{2}}
	\end{align*}
	and for $\nu \in \mathbb{R}$
	\begin{eqnarray*}
		\left|a_{3}-\nu a_{2}^{2}\right|
		\leq \left\{
		\begin{array}{ll}
			\dfrac{t}{3-2\alpha} &~\hbox{if}~ 
			\left|\nu - 1\right|\leq \dfrac{\left|(2-\alpha)^{2}-\left(\alpha^{2}-5\alpha+4\right)t^{2}\right|}{8t^{2}\left(3-2\alpha\right)}\\\\
			\dfrac{8t^{3}\left|\nu - 1\right|}{\left|(2-\alpha)^{2}-\left(\alpha^{2}-5\alpha+4\right)t^{2}\right|} &~\hbox{if}~  
			\left|\nu - 1\right|\geq \dfrac{\left|(2-\alpha)^{2}-\left(\alpha^{2}-5\alpha+4\right)t^{2}\right|}{8t^{2}\left(3-2\alpha\right)}.
		\end{array}		
		\right.
	\end{eqnarray*}
\end{corollary}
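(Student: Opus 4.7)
The plan is to mirror the template used for Theorem \ref{Th3} and Theorem \ref{Th1}: translate the two subordinations into equalities involving Schwarz functions $u$ and $v$, expand both sides as power series, match the coefficients of $z$ and $z^2$, and then extract $|a_2|$, $|a_3|$ and the Fekete--Szeg\"o functional by the same addition/subtraction manoeuvre.

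The preparatory step is the Taylor expansion of the Mocanu-type expression $M_\alpha(f)(z) := \left(\tfrac{zf'(z)}{f(z)}\right)^{\alpha}\left(1+\tfrac{zf''(z)}{f'(z)}\right)^{1-\alpha}$. Using the familiar expansions $\tfrac{zf'(z)}{f(z)} = 1 + a_2 z + (2a_3 - a_2^2)z^2 + \cdots$ and $1+\tfrac{zf''(z)}{f'(z)} = 1 + 2a_2 z + (6a_3 - 4a_2^2)z^2 + \cdots$, I would apply the binomial series $(1+X)^{\beta} = 1 + \beta X + \tfrac{\beta(\beta-1)}{2}X^2 + \cdots$ with $\beta = \alpha$ and $\beta = 1-\alpha$ respectively, then multiply the two series to collect terms up to $z^2$. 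The predicted outcome, matching the form taken by the author in \eqref{3.6}--\eqref{3.9}, is
\begin{equation*}
M_\alpha(f)(z) = 1 + (2-\alpha)\, a_2\, z + \Bigl[\,2(3-2\alpha)\,a_3 + \tfrac{1}{2}\bigl((\alpha-2)^2 - 3(4-3\alpha)\bigr) a_2^{2}\Bigr] z^{2} + \cdots.
\end{equation*}
The same expansion applied to $g(w) = f^{-1}(w) = w - a_2 w^2 + (2a_2^2 - a_3)w^3 - \cdots$ will, after using the inverse-coefficient relations, produce the coefficient $-(2-\alpha)a_2$ at $w$ and the coefficient $\bigl[8(1-\alpha) + \tfrac{\alpha}{2}(\alpha+5)\bigr]a_2^2 - 2(3-2\alpha)a_3$ at $w^2$, matching \eqref{3.8}--\eqref{3.9}. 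This is the only genuine calculation and is the main technical obstacle, since one must be careful with the non-integer exponents $\alpha$ and $1-\alpha$ and their cross-term $\alpha(1-\alpha) a_2 \cdot 2a_2$ when expanding the product.

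Having set up these four coefficient identities, I would invoke $\Pi(x, u(z)) + 1 - a = 1 + h_2(x) u_1 z + [h_2(x) u_2 + h_3(x) u_1^2] z^2 + \cdots$ (and likewise for $v$) together with $|u_k|, |v_k| \leq 1$. From the two first-order identities one gets $u_1 = -v_1$ and $2(2-\alpha)^2 a_2^2 = [h_2(x)]^2 (u_1^2 + v_1^2)$. Adding the two second-order identities cancels $a_3$ and yields $(\alpha^2 - 3\alpha + 4)a_2^2 = h_2(x)(u_2 + v_2) + h_3(x)(u_1^2 + v_1^2)$; substituting the expression for $(u_1^2+v_1^2)$ then isolates $a_2^2$ as a pure multiple of $(u_2+v_2)$, and the values $h_2(x) = bx$, $h_3(x) = pbx^2 + qa$ (from \eqref{HP-RR}, \eqref{HP-ab}) together with $|u_2+v_2|\le 2$ give the stated bound on $|a_2|$.

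Subtracting the two second-order identities eliminates $a_2^2$ and gives $a_3 = \tfrac{h_2(x)(u_2 - v_2)}{4(3-2\alpha)} + a_2^2$; inserting the previously obtained expressions yields the bound on $|a_3|$. Finally, for the Fekete--Szeg\"o functional I would write
\begin{equation*}
a_3 - \nu a_2^2 = h_2(x)\Bigl[\bigl(\Omega(\nu,x) + \tfrac{1}{4(3-2\alpha)}\bigr) u_2 + \bigl(\Omega(\nu,x) - \tfrac{1}{4(3-2\alpha)}\bigr) v_2\Bigr],
\end{equation*}
with $\Omega(\nu,x) = \tfrac{(1-\nu)[h_2(x)]^2}{(\alpha^2-3\alpha+4)[h_2(x)]^2 - 2 h_3(x)(2-\alpha)^2}$, and split into the two cases $|\Omega(\nu,x)| \lessgtr \tfrac{1}{4(3-2\alpha)}$. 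Translating the threshold back in terms of $\nu$ via the formulas for $h_2$ and $h_3$ produces exactly the dichotomy recorded in the statement of the theorem, completing the plan.
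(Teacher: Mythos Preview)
Your plan reproduces, almost verbatim, the paper's proof of Theorem~\ref{Th2}. But the statement at hand is a \emph{corollary} of that theorem: in the paper it is obtained in a single sentence by invoking Remark~\ref{Rem-Che-P}, i.e.\ by inserting the Chebyshev-of-the-second-kind parameters $a=1$, $b=p=2$, $q=-1$, $x\mapsto t$ into Theorem~\ref{Th2}. So the whole Schwarz-function machinery you outline is superfluous here; you may simply cite Theorem~\ref{Th2} and perform the substitution $h_{2}=2t$, $h_{3}=4t^{2}-1$.

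That substitution is in fact the only content specific to the corollary, and your write-up never carries it out --- you stop at the general $h_{2}(x),h_{3}(x)$ formulae and assert that they ``produce exactly the dichotomy recorded in the statement''. If you actually do the arithmetic you will find
\[
\bigl[(\alpha^{2}-3\alpha+4)b-2p(2-\alpha)^{2}\bigr]bx^{2}-2qa(2-\alpha)^{2}
\;\longmapsto\; 2\bigl[(2-\alpha)^{2}-2(\alpha^{2}-5\alpha+4)t^{2}\bigr],
\]
so that the correct denominator is $\bigl|(2-\alpha)^{2}-2(\alpha^{2}-5\alpha+4)t^{2}\bigr|$ rather than the printed $\bigl|(2-\alpha)^{2}-(\alpha^{2}-5\alpha+4)t^{2}\bigr|$ (a missing factor $2$ on the $t^{2}$-term; compare $\alpha=0$ with Corollary~\ref{cor2.2-Cor}). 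Your method is sound --- just apply it at the right level and be prepared for this discrepancy with the recorded statement.
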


\end{document}